\documentclass[reqno, 10.5pt]{amsart}

\usepackage{amsfonts,amsmath,amssymb,amsthm,amscd,enumerate,hyperref,comment}
\usepackage[dvipsnames]{xcolor}

\newtheorem{thm}{Theorem}[section]

\newtheorem{lem}{Lemma}[section]

\theoremstyle{remark}
\newtheorem{rmk}{Remark}[section]

\numberwithin{equation}{section}

\theoremstyle{definition}
\newtheorem{defn}{Definition}[section]

\newcommand{\R}{\ensuremath{\mathbb{R}}}

\newcommand{\rS}{\ensuremath{\mathbb{S}}}

\newcommand{\na}{\nabla}

\newcommand{\vphi}{\varphi}

\newcommand{\tr}{\operatorname{tr}}

\begin{document}

\title[On the classification of 4D steady and expanding Ricci solitons] {A note on the classification of four-dimensional gradient steady and expanding Ricci solitons}
\author[Huai-Dong Cao and Junming Xie]{Huai-Dong Cao and Junming Xie}

\address{Department of Mathematics, Lehigh University, Bethlehem, PA 18015}
\email{huc2@lehigh.edu}

\address{Department of Mathematics, Rutgers University, Piscataway, NJ 08854}
\email{junming.xie@rutgers.edu}

\begin{abstract}

	In this note, we study the classification of four-dimensional complete gradient steady and expanding Ricci solitons. Specifically, under the asymptotically cylindrical (respectively, asymptotically conical) assumption, we classify gradient steady (respectively, expanding) Ricci solitons with half-harmonic Weyl curvature. In addition, we obtain a partial classification of four-dimensional gradient expanding Ricci solitons with half-nonnegative isotropic curvature.
	
\end{abstract}

\maketitle


\section{Introduction}

In this note, building on our recent work \cite{Cao-Xie:23, Cao-Xie:25, Cao-Xie:25b}, we continue the investigation of four-dimensional complete gradient Ricci solitons with special geometry. Our primary focus is on four-dimensional gradient expanding solitons with half-nonnegative isotropic curvature, as well as gradient steady and expanding solitons with half-harmonic Weyl curvature. 

Recall that a Riemannian manifold $(M^n, g)$ is called a {\em gradient Ricci soliton} if there exists
a smooth potential function $f$ on $M^n$ such that the Ricci tensor $Rc$ satisfies the equation
\begin{equation*}
	Rc + \nabla^2 f=\rho g
\end{equation*}
for some constant $\rho \in{\mathbb R}$, where $\na^2 f$ denotes the Hessian of $f$. The Ricci soliton is called expanding if $\rho <0$, steady if $\rho =0$, or shrinking if $\rho>0$. Gradient Ricci solitons play a fundamental role in the study of Hamilton's Ricci flow \cite{Ha:82},  as they generate self-similar solutions and model the formation of singularities
\cite{Ha:95F, Perelman:03, Naber:10, Enders-Mueller-Topping:11}. In particular, steady solitons often arise as Type II singularity models \cite{Ha:95F, Cao:97}, while expanding solitons typically emerge as Type III singularity models \cite{Cao:97, Chen-Zhu:00}. Consequently, their classification remains a central problem in Ricci flow and geometric analysis.  

In the steady case, Hamilton \cite{Ha:88} classified all two-dimensional complete gradient solitons, while the three-dimensional ones were fully classified by the work of Brendle \cite{Brendle:13} and Lai \cite{Lai:24, Lai:25, Lai:25b}. In dimensions $n\geq 4$, classification results have been obtained under various curvature conditions, such as locally conformally flat \cite{Cao-Chen:12, Catino-Mantegazza:11},  harmonic Weyl \cite{Kim:17, LiFJ:25, Kim:25}, and Bach-flat or $D$-flat conditions \cite{CaoCCMM:14, Cao-Yu:21}. Moreover, Brendle \cite{Brendle:14} extended his three-dimensional result to higher dimensions for asymptotically cylindrical steady solitons (see also \cite{Deng-Zhu:20a, Deng-Zhu:20b}). 

In dimension four, it is well known that  the bundle of 2-forms on an oriented four-manifold $(M^4,g)$ admits an orthogonal decomposition, $$\wedge^2(M) = \wedge^{+}(M) \oplus \wedge^{-}(M),$$ into the self-dual and anti-self-dual 2-forms. Accordingly, the Riemann curvature operator $ Rm : \wedge^2(M) \to \wedge^2(M)$, considered as a self-adjoint linear map, admits a block decomposition
\begin{equation} \label{eq:Rmdecomp}
	Rm = 
	\begin{pmatrix}
		A & B \\
		B^t & C
	\end{pmatrix}
	=
	\begin{pmatrix}
		W^+ +\frac{R}{12}I & \mathring{Rc} \\
		\mathring{Rc} & W^- + \frac{R}{12}I
	\end{pmatrix},
\end{equation}
where $W^{\pm}$ denote the self-dual and anti-self-dual parts of the Weyl tensor, and $\mathring{Rc}$ denotes the traceless Ricci tensor. $(M^4,g)$ is said to be {\em half locally conformally flat} if either $W^+=0$ or $W^-=0$. Similarly, $(M^4,g)$ is said to have {\em half-harmonic Weyl curvature} if either $\delta W^+=0$ or $\delta W^-=0$, where $\delta$ denotes the divergence operator.

Half locally conformally flat gradient shrinking and steady solitons were classified by Chen--Wang \cite{Chen-Wang:15}, while asymptotically conical expanding solitons in this class were recently shown to be rotationally symmetric with positive curvature operator in \cite{Cao-Xie:25b}. On the other hand, although four-dimensional gradient shrinking solitons with half-harmonic Weyl curvature were classified by Wu--Wu--Wylie \cite{WWW:18}, the steady case remains open.

Our first main result provides a classification of four-dimensional {\em asymptotically cylindrical}\footnote{``Asymptotically cylindrical" is understood in the sense of Brendle \cite[Definition on p. 191]{Brendle:14}.} gradient steady solitons with half-harmonic Weyl curvature.  

\begin{thm}\label{thm:cylin_half-harmonic}
	Let $(M^4,g,f)$ be a four-dimensional complete, noncompact, asymptotically cylindrical gradient steady Ricci soliton with half-harmonic Weyl curvature. Then, $(M^4,g,f)$ is isometric to the Bryant soliton up to scalings.
\end{thm}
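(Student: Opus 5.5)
The natural route is to reduce to Brendle's theorem \cite{Brendle:14}. Brendle proved that an $n$-dimensional ($n\ge 4$) complete, noncompact, asymptotically cylindrical gradient steady soliton with positive sectional curvature (and $\nabla f\neq 0$ off a compact set) is the Bryant soliton. His later refinements, together with \cite{Deng-Zhu:20a, Deng-Zhu:20b}, allow one to weaken the curvature hypothesis. So the task is to show that the half-harmonic Weyl condition, say $\delta W^+=0$ after possibly reversing orientation, combined with asymptotic cylindricity forces enough positivity or rigidity to apply that rigidity theorem. Alternatively, it may force $W^+\equiv 0$, in which case the classification of half locally conformally flat steady solitons by Chen--Wang \cite{Chen-Wang:15} applies directly.

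\textbf{Step 1: an elliptic equation for $W^+$.} On a gradient soliton, the identity $\delta W = -\tfrac12 C$ (Cotton tensor) and its half version give
\[
\delta W^+ = 0 \quad\Longleftrightarrow\quad W^+(\nabla f,\cdot,\cdot,\cdot)=0,
\]
as in Wu--Wu--Wylie \cite{WWW:18}. The reason is that $C_{ijk}=R_{ijkl}\nabla_l f$ modulo Ricci terms, and for a soliton the $D$-tensor relates the two. Combining this with the weighted Weitzenb\"ock formula for $W^+$ on a soliton yields
\[
\Delta_f W^+ = 18\, W^+\!*W^+ \;(+\;\text{terms involving } \mathring{Rc}).
\]
The condition $W^+(\nabla f,\cdot,\cdot,\cdot)=0$ means $W^+$ has $\nabla f$ in its kernel wherever $\nabla f\ne 0$. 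Since a self-dual Weyl tensor annihilating a nonzero vector must vanish in dimension four, this already forces $W^+=0$ on the open dense set $\{\nabla f\neq 0\}$.

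Concretely, $W^+$ is determined by a traceless symmetric endomorphism of $\wedge^+$. Each self-dual form $\omega$ satisfies $\iota_X\omega\ne0$ for $X\ne0$, and the map $\wedge^+\to X^\perp$, $\omega\mapsto\iota_X\omega$, is an isomorphism. From this one shows that $W^+(X,\cdot,\cdot,\cdot)=0$ implies $W^+=0$. This is the key algebraic lemma, and I would verify it carefully in an adapted orthonormal frame $e_1=\nabla f/|\nabla f|$ with the basis $e_1\wedge e_a+e_b\wedge e_c$ of $\wedge^+$.

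\textbf{Step 2: conclude and invoke the classification.} By continuity $W^+\equiv 0$ on all of $M$, so $g$ is half locally conformally flat. Chen--Wang \cite{Chen-Wang:15} then shows that a complete steady soliton with $W^+=0$ is either Ricci-flat, hence flat since it is also half conformally flat and Einstein with $W^+=0$ (more carefully: flat or a quotient), or locally conformally flat, hence Bryant by Cao--Chen \cite{Cao-Chen:12}. Asymptotic cylindricity rules out flatness and forces the Bryant soliton up to scaling. In the nonflat case, one could alternatively use that the level sets are eventually round $\rS^3$ to apply Brendle directly.

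\textbf{Main obstacle.} The main obstacle is Step 1, specifically justifying that $\delta W^\pm=0$ is equivalent to $W^\pm(\nabla f,\cdot,\cdot,\cdot)=0$ on a steady soliton. Wu--Wu--Wylie derived this in the shrinking case via an integration by parts that uses the decay of $f$-weighted volume. In the steady case, I would instead use asymptotic cylindricity: the curvature is bounded and $|\nabla f|\to 1$, so $e^{-f}$ decays exponentially along the end and the weighted integrals of
\[
|W^+(\nabla f,\cdot,\cdot,\cdot)|^2\, e^{-f}
\]
converge. The boundary terms on large level sets $\{f=s\}$ vanish, since their area grows like $s^{3/2}$ while $e^{-s}$ dominates. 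If this integration argument fails, the fallback is to use the asymptotics directly. On the cylinder $\rS^3\times\R$ one has $W^\pm=0$, and the maximum principle applied to $\Delta_f|W^+|^2$ on the end may force vanishing.
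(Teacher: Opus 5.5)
Your Step 1 rests on a false equivalence. On any four-manifold $\delta W$ is a multiple of the Cotton tensor $C$. On a gradient soliton, $C$ differs from $W(\nabla f,\cdot,\cdot,\cdot)$ by the Cao--Chen $D$-tensor, which is built from $Rc$, $R$ and $\nabla f$ and does not vanish in general. So $\delta W^{+}=0$ only says that $W^{+}(\nabla f,\cdot,\cdot,\cdot)$ is determined by the self-dual part of $D$, not that it vanishes. A concrete counterexample is the shrinker $\rS^2\times\R^2$ with $f=|x|^2/4$. It is locally symmetric, so $\delta W^{\pm}=0$. Yet $W^{+}\neq 0$, since for a product of surfaces $W^{+}$ is $K_1+K_2$ times a fixed nonzero matrix. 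Also $\nabla f\neq 0$ off $\rS^2\times\{0\}$, so by your own algebraic lemma (which is correct) $W^{+}(\nabla f,\cdot,\cdot,\cdot)\neq 0$ there. Notice too that Step 1 never uses asymptotic cylindricity. It would make every soliton with $\delta W^{+}=0$ and $\nabla f\not\equiv 0$ half conformally flat, contradicting the Wu--Wu--Wylie classification, in which $\rS^2\times\R^2$ appears. Your proposed repair by integration by parts also fails. With the normalization $Rc+\nabla^2 f=0$, $f$ behaves like $-r$ along the cylindrical end (as on the Bryant soliton), so the weight $e^{-f}$ for which $\Delta_f$ is symmetric grows exponentially and has infinite mass. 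The boundary terms therefore do not vanish, and in any case you do not name an identity whose integration would give $W^{+}(\nabla f,\cdot,\cdot,\cdot)=0$.

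What is missing is a global maximum principle argument in which the asymptotics enter essentially. The paper uses the Wu--Wu--Wylie inequality $\Delta_F(|W^{+}|/R)\ge Q/(2|W^{+}|R^2)$ with $F=f-2\log R$, where $Q=R^2|W^{+}|^2-36R\det W^{+}+4|W^{+}|^2|\mathring{Rc}|^2-R\langle(\mathring{Rc}\circ\mathring{Rc})^{+},W^{+}\rangle$ is nonnegative wherever $\nabla f\neq 0$ precisely because $\delta W^{+}=0$. Asymptotic cylindricity gives $|W^{+}|/R\to 0$ at infinity, since the ratio is scale invariant and vanishes on the round cylinder; so a positive maximum is attained at an interior point. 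The strong maximum principle and analyticity then force $Q\equiv 0$, and the alternatives left by the equality characterization are ruled out:
\begin{itemize}
\item $\nabla f\equiv 0$ contradicts $R>0$;
\item $\lambda_1=\lambda_2=-\lambda$, $\lambda_3=\lambda_4=\lambda$, $R=4\lambda$ gives $R_{11}=0$, hence $\nabla R=2Rc(\nabla f)=0$, so $R$ is constant and $0=\Delta_f R=-2|Rc|^2$, a contradiction.
\end{itemize}
Dividing by $R$ is what brings in the good terms from $\Delta_f R=-2|Rc|^2$. Your fallback with $\Delta_f|W^{+}|^2$ has reaction terms ($\det W^{+}$ and the coupling to $(\mathring{Rc}\circ\mathring{Rc})^{+}$) of no definite sign, and in the steady case no zeroth-order term compensates. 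Your Step 2 (Chen--Wang, with the Ricci-flat alternative excluded by $R>0$) matches the paper's last step. Note, though, that a half conformally flat Ricci-flat soliton is hyperk\"ahler, not necessarily flat.
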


The proof of Theorem \ref{thm:cylin_half-harmonic} builds on computations from \cite{WWW:18} and adapts the maximum principle argument developed in our recent work  \cite{Cao-Xie:25b} on curvature pinching for asymptotically conical expanding solitons. 

Using a similar approach, we also obtain the following classification result for asymptotically conical\footnote{See \cite[Definition 2.1]{Cao-Xie:25b} for the notion of {\em asymptotically conical gradient expanding solitons}.} gradient expanding solitons with half-harmonic Weyl curvature. 

\begin{thm}\label{thm:conical_half-harmonic}
	Let $(M^4,g,f)$ be a four-dimensional complete, noncompact,  asymptotically conical gradient expanding Ricci soliton with half-harmonic Weyl curvature.  If $(M^4,g,f)$ is asymptotic to a non-flat Euclidean cone, then it has positive curvature operator and is rotationally symmetric.
\end{thm}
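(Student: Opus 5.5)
Without loss of generality we assume $\delta W^+=0$ (otherwise reverse the orientation). We normalize the soliton so that $Rc+\na^2 f=-\frac12 g$. The plan is to show $W^+\equiv 0$ and then invoke the classification of half locally conformally flat asymptotically conical expanders from \cite{Cao-Xie:25b}. That result gives positive curvature operator and rotational symmetry when the asymptotic cone is a non-flat Euclidean cone.

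\textbf{Step 1: a Bochner-type inequality for $W^+$.} Following \cite{WWW:18}, for gradient solitons $\delta W^+=0$ forces $W^+(\na f,\cdot,\cdot,\cdot)=0$, since the divergence of $W$ is tied to the Cotton tensor and to $\na f$ via the soliton identities. Combining this with the Weitzenb\"ock formula for harmonic self-dual Weyl tensors adapted to the drift Laplacian $\Delta_f=\Delta-\na_{\na f}$ gives
\begin{equation*}
\Delta_f |W^+|^2 \ \ge\ 2|\na W^+|^2 - 2\rho\,|W^+|^2 - 36\,\det W^+ \cdot(\text{const}) + R|W^+|^2 ,
\end{equation*}
with $\rho=-\frac12$. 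The cubic term is estimated by the standard algebraic inequality $|\det W^+|\le \frac{1}{\sqrt6}|W^+|^3$ (up to normalization). The outcome is a differential inequality of the form
\[
\Delta_f |W^+| \ge \big(-\rho\,+\tfrac{R}{2}-\sqrt6 |W^+|\big)|W^+|
\]
in the barrier sense. Because the drift term $\rho g$ enters with the expanding sign, the constant term has a definite sign.

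\textbf{Step 2: maximum principle using the conical asymptotics.} For an asymptotically conical expander, the curvature decays like $|Rm|=O(r^{-2})=O(f^{-1})$, and $f\sim r^2/4$. We therefore consider the scale-invariant quantity $u=|W^+|/R$, or alternatively $u=f|W^+|$. The non-flat Euclidean cone hypothesis ensures $R>0$ by \cite{Cao-Xie:25b}, with $R\sim c/f$ at infinity. Next we compute $\Delta_f u$ using $\Delta_f R=-R-2|Rc|^2$ (for $\rho=-\frac12$). The asymptotic cone is Euclidean in the sense that its link is a round-type sphere quotient; more precisely, on the cone the Weyl curvature of a cone over a $3$-manifold of constant curvature vanishes. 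Hence $u\to 0$ at infinity, or at least $\limsup u$ is controlled. A maximum principle argument, as in \cite{Cao-Xie:25b}, at an interior positive maximum of $u$ then gives a contradiction unless $u\equiv0$. The key point is that the good terms $|Rc|^2/R$ and $|\na W^+|^2$ dominate the cubic $\det W^+$ term once $u$ is small, or by a continuity or pinching argument.

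\textbf{Main obstacle.} The hard step is handling the cubic term $\det W^+$, which has no sign, together with the gradient cross terms coming from the quotient $|W^+|/R$. The pinching argument of \cite{Cao-Xie:25b} controlled the traceless Ricci part there; here the first attempt will be to apply the same choice of test function $u=|W^+|^2/R^{2}$ and the refined Kato inequality $|\na W^+|^2\ge \frac53|\na|W^+||^2$, valid for harmonic Weyl, to absorb the gradient terms. If the cubic term still cannot be absorbed globally, the fallback is to use the decay $|W^+|=o(R)$ at infinity from conical asymptotics and run a continuity argument on sublevel sets of $f$. Once $W^+\equiv0$ is obtained, Theorem in \cite{Cao-Xie:25b} yields the conclusion.
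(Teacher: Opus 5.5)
Your overall plan matches the paper's. You assume $\delta W^+=0$ and run a maximum principle on the scale-invariant ratio $u=|W^+|/R$, which tends to $0$ at infinity because the Euclidean cone has $\bar W=0$ and $\bar R>0$. You then conclude $W^+\equiv0$ and finish with \cite[Corollary 1.1]{Cao-Xie:25b}. However, Step 1 is wrong, and Step 2 has a genuine gap.

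\textbf{Step 1.} The condition $\delta W^+=0$ does \emph{not} force $W^+(\na f,\cdot,\cdot,\cdot)=0$. In dimension four, for $v\neq0$ the contraction $\iota_v:\wedge^+\to\wedge^1$ is injective. So that identity would give $W^+=0$ wherever $\na f\neq0$, and the theorem would be trivial. It fails, for example, for the K\"ahler expander $\mathbb{H}^2(-\tfrac12)\times\R^2$ (Gaussian expander on the $\R^2$ factor): there $R\equiv-1$, so $\delta W^+=0$, yet $W^+\neq0$. What $\delta W^+=0$ actually yields, through the identity linking the Cotton tensor, $W(\cdot,\cdot,\cdot,\na f)$ and the $D$-tensor, is that $W^+(\cdot,\cdot,\cdot,\na f)$ is expressed algebraically through $Rc$ and $\na f$.

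Your Bochner formula is also off. The soliton identity gives $\Delta_f W^+=2\rho W^+-(\text{quadratic terms})$. Hence $\Delta_f|W^+|^2$ equals $2|\na W^+|^2+4\rho|W^+|^2-36\det W^+$ plus a term of the form $\langle(\mathring{Rc}\circ\mathring{Rc})^+,W^+\rangle$. Three things follow:
\begin{itemize}
\item the $\rho$-term is \emph{negative} for expanders, not a helpful constant;
\item there is no $+R|W^+|^2$ term (that belongs to Derdzi\'nski's Weitzenb\"ock formula for $\Delta$, not $\Delta_f$);
\item you dropped the cross term in $\mathring{Rc}$.
\end{itemize}
The $\rho$-terms cancel only after passing to $|W^+|/R$ with the weighted Laplacian $\Delta_F$, $F=f-2\log R$. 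That computation is exactly Lemma~\ref{lem:WWW-Delta_F}.

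\textbf{Step 2.} At an interior maximum $p_0$ of $u$, nothing makes $u$ small; decay at infinity only guarantees that the maximum is attained. Without further input, the bracket in Lemma~\ref{lem:WWW-Delta_F} has no sign, because its two indefinite terms are controlled in opposite regimes. From $|\det W^+|\le C|W^+|^3$, the cubic term $36R\det W^+$ is absorbed by $R^2|W^+|^2$ only when $u$ is small. The term $R\langle(\mathring{Rc}\circ\mathring{Rc})^+,W^+\rangle$ is absorbed by $4|W^+|^2|\mathring{Rc}|^2$ only when $u$ is bounded \emph{below}. Your fallbacks do not help. The refined Kato inequality improves only gradient terms, where plain Kato already suffices. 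A continuity argument has nothing to run on, since $\sup u$ is invariant along the self-similar flow.

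The missing idea is the pointwise algebraic inequality of Wu--Wu--Wylie, Lemma~\ref{lem:WWW>=0}. Using the relation between $W^+(\cdot,\cdot,\cdot,\na f)$ and $Rc$ forced by $\delta W^+=0$, the bracket is $\ge0$ wherever $\na f\neq0$, with a precise equality characterization. This is the only place the half-harmonic hypothesis genuinely enters.

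Even with that inequality, it is not strict. Calabi's strong maximum principle then gives only that $u$ is constant and the bracket vanishes near $p_0$, so you still need the rigidity step. By analyticity, one of the following holds on all of $M^4$:
\begin{itemize}
\item $W^+\equiv0$, which is what you want;
\item $\na f\equiv0$, so the soliton is Einstein with negative constant, contradicting $R>0$ (which the paper gets from \cite[Theorem 1.6]{Chan:23} and the strong maximum principle);
\item equality case (2) holds identically, so $R_{11}=0$, hence $\na R=2Rc(\na f)=0$, and then $0=\Delta_f R=2\rho R-2|Rc|^2<0$, a contradiction.
\end{itemize}
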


\begin{rmk}
	By \cite[Equation (A.9)]{Cao-Xie:25b} and \cite[Lemma A.2]{Cao-Xie:25b}, the condition that the asymptotic cone is a non-flat Euclidean cone is equivalent to requiring that it has positive scalar curvature and is half locally conformally flat.  Thus, Theorem~\ref{thm:conical_half-harmonic} slightly strengthens our earlier result \cite[Corollary 1.1]{Cao-Xie:25b}. 
\end{rmk}

We next consider four-dimensional complete noncompact gradient expanding Ricci solitons with half-nonnegative isotropic curvature. Recall that an oriented four-manifold $(M^4,g)$ is said to have {\em positive isotropic curvature} (PIC) if and only if the matrices $A$ and $C$ in \eqref{eq:Rmdecomp} are both 2-positive \cite{Ha:97, Micallef-Wang:93}. Moreover, it is said to have {\em half-positive isotropic curvature} (half-PIC) if and only if either $A$ or $C$ is 2-positive. Analogously, one defines  {\em nonnegative isotropic curvature}/{\em weakly positive isotropic curvature} (WPIC) and  {\em half-nonnegative isotropic curvature} (half-WPIC). For additional background on PIC and WPIC in dimension four, we refer the reader to Micallef--Wang \cite{Micallef-Wang:93} and Hamilton \cite{Ha:97}. 

Four-dimensional gradient shrinking Ricci solitons with PIC or WPIC were classified by Li--Ni--Wang \cite{Li-Ni-Wang:18}. Partial classification results for shrinking solitons with half-WPIC, steady solitons with either WPIC or half-WPIC, and  expanding solitons with  WPIC were obtained recently in our work \cite{Cao-Xie:23, Cao-Xie:25}. In particular, the results for shrinking and steady solitons with half-WPIC in \cite{Cao-Xie:23, Cao-Xie:25} relied on a combination of  curvature pinching estimate \cite[Proposition 3.1]{Cao-Xie:23} (see also \cite{Cho-Li:23}) and Hamilton's strong maximum principle.

However, this approach does not extend directly to expanding solitons with half-WPIC,  since an analogous curvature pinching estimate is not currently available in that setting. Nevertheless, by combining an observation from Micallef--Wang \cite{Micallef-Wang:93} with the strong maximum principle argument in \cite{Cao-Xie:25}, we are able to obtain the following partial classification result in the expanding case.

\begin{thm} \label{thm:half-WPIC-expander}
	Let $(M^4, g, f)$ be a four-dimensional complete noncompact, non-flat, gradient expanding Ricci soliton with half-nonnegative isotropic curvature. Then, either
	\begin{itemize}
		\item[(i)] $(M^4, g, f)$ has half-positive isotropic curvature, or
		
		\smallskip	
		\item[(ii)] $(M^4, g, f)$ is a locally irreducible expanding K\"ahler-Ricci soliton, or
		
		\smallskip	
		\item[(iii)] $(M^4, g, f)$ is isometric to a quotient of either $N^3\times \R$, where $N^3$ is a $3$-dimensional expanding Ricci soliton, or $\Sigma \times \R^2$, or $\Sigma_1 \times \Sigma_2$, where $\Sigma$, $\Sigma_1$, and $\Sigma_2$ are one of the two-dimensional complete gradient expanding Ricci solitons in \cite[Theorem 1]{Bernstein-Mettler:15} and \cite{Ramos:18} {\rm (see Theorem \ref{thm:halfWPIC} for more details)}.
	\end{itemize}
\end{thm}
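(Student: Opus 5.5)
Without loss of generality (reversing orientation if necessary) we assume $A$ is weakly $2$-positive: if $a_1\le a_2\le a_3$ are the eigenvalues of $A=W^++\frac{R}{12}I$, then $a_1+a_2\ge 0$ everywhere on $M$. The plan is to run Hamilton's strong maximum principle for systems along the Ricci flow generated by the expander. The half-WPIC condition on $A$ is preserved by the Ricci flow in dimension four (Hamilton \cite{Ha:97}). The expanding soliton induces a self-similar solution $g(t)=t\,\phi_t^*g$, so the strong maximum principle argument of \cite{Cao-Xie:25} applies verbatim. It yields a dichotomy: either $a_1+a_2>0$ everywhere, which is case (i), or the set of self-dual $2$-forms realizing the null directions $a_1+a_2=0$ is invariant under parallel translation. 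In the second case the null cone is nonempty at every point, since $M$ is connected and the null set is both open and closed under the strong maximum principle.

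In the degenerate case we examine the parallel structure. The key input is the observation of Micallef--Wang \cite{Micallef-Wang:93}. When $a_1+a_2=0$, the Weitzenböck/second-variation analysis restricted to the null directions forces the $B$-block ($\mathring{Rc}$) to interact rigidly with the null eigenvectors of $A$. In particular, the eigenspace of $a_1,a_2$ (or of $a_3$) gives a parallel subbundle of $\wedge^+$. We then split into two subcases.

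\emph{First subcase: $\wedge^+$ contains a parallel line subbundle.} A parallel self-dual $2$-form of constant length gives, after normalization, a parallel complex structure compatible with $g$, at least on the universal cover. Then $g$ is Kähler, and the soliton is an expanding Kähler--Ricci soliton. If it is locally irreducible we are in case (ii). Otherwise de Rham decomposition applies.

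\emph{Second subcase: the reduced holonomy of $\wedge^+$ is trivial, or the restricted holonomy of $M$ is reducible.} Here we pass to the universal cover and apply the de Rham decomposition together with the fact that a product gradient soliton has gradient soliton factors with the same $\rho$. This gives $N^3\times\R$, $\Sigma\times\R^2$, or $\Sigma_1\times\Sigma_2$. A flat $\R^4$ is excluded since $M$ is non-flat. The three-dimensional factor is an expanding soliton. In the surface factors, WPIC forces the relevant curvature sign conditions (e.g.\ for $\Sigma_1\times\Sigma_2$, the half-WPIC condition $a_1+a_2\ge0$ reads roughly $K_1+K_2\ge 0$-type constraints), and complete two-dimensional expanders are classified by Bernstein--Mettler \cite{Bernstein-Mettler:15} and Ramos \cite{Ramos:18}. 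This yields case (iii), with the precise list deferred to Theorem \ref{thm:halfWPIC}.

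The main obstacle is the second step: extracting, from a mere invariance of the null set of $A$, a genuinely parallel $2$-form or a holonomy reduction. For PIC/WPIC in \cite{Cao-Xie:25} the curvature pinching made this easy; here no pinching is available. The plan is to use the Micallef--Wang observation that at a null vector $\omega$ of $A$ restricted to the 2-dimensional eigenspace, the first- and second-order conditions from the maximum principle force $B(\omega)=0$ and $\nabla$ to preserve the eigenspaces. We would then check, case by case on the multiplicity of the lowest eigenvalues ($a_1=a_2$ versus $a_1<a_2$), that either the $a_3$-eigenline is parallel (Kähler) or $\wedge^+$ is flat, which leads to a product.
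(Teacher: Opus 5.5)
Your overall plan (strong maximum principle along the self-similar flow, then a holonomy argument) is the paper's. However, you have put the difficulty in the wrong place and skipped the step that actually needs an idea.

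The step you call the main obstacle is immediate once half-WPIC is written, following Micallef--Wang, as $P^+:=\frac{R}{4}I-A=\frac{R}{6}I-W^+\ge 0$. Its eigenvalues are $a_2+a_3,\ a_1+a_3,\ a_1+a_2$, so at a degenerate point $\ker P^+$ is exactly the $a_3$-eigenspace of $A$. The strong maximum principle (the proof of \cite[Proposition 4.6]{Micallef-Wang:93}, carried to $t=0$ by self-similarity) says precisely that $\ker P^+$ is a parallel subbundle of $\wedge^+$ of constant rank. That already is the holonomy reduction. The first/second-order analysis, $B(\omega)=0$, and the case split on multiplicities in your last paragraph are unnecessary, and as written they are unproved.

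What is genuinely missing is the case where this parallel subbundle is all of $\wedge^+$. Then there is no reduction, and your step ``parallel line subbundle $\Rightarrow$ K\"ahler'' does not apply. This is exactly what happens when $\mathrm{Hol}^0(M^4,g)=SO(4)$: $SO(4)$ acts on $\wedge^+$ through $SO(3)$, transitively on unit vectors, so a nonzero invariant kernel must be everything. The paper's Claim 2 shows this by transporting a null eigenvector onto a top eigenvector. Then $P^+=0$ at $p$, so $R=2\tr P^+=0$ there. Since half-WPIC gives $R\ge 0$, $R$ attains an interior minimum, and the identity $\Delta_f R=2\rho R-2|Rc|^2$ with the strong maximum principle forces $Rc\equiv 0$. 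A Ricci-flat expander has $\nabla^2 f=\rho g$ with $\rho<0$ and is therefore flat \cite{Petersen-W:09}, contradicting non-flatness. Your proposal never uses $R\ge 0$ or this rigidity, so full holonomy with a degenerate null set is left open.

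Relatedly, your claim that ``$\wedge^+$ flat leads to a product'' is false. Restricted holonomy $SU(2)$ is irreducible (hyperk\"ahler), so de Rham gives nothing there. It must be excluded by the same Ricci-flat $\Rightarrow$ flat argument, as in the paper's case (3c). With these repairs, Berger's list and de Rham finish the proof as you outline: $U(2)$ gives (ii), and the locally reducible case gives (iii).
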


In view of Theorem~\ref{thm:half-WPIC-expander} and \cite[Theorem 1.3]{Cao-Xie:25}, the classification of expanding and steady  solitons with half-WPIC is essentially reduced to the case of half-PIC. Under additional assumptions on the Ricci tensor and the asymptotic  behavior at infinity, we obtain further classification results; see Theorem~\ref{thm:conical_half-PIC} and Theorem~\ref{thm:cylin_half-PIC} in Section \ref{sec:half-WPIC}.

\medskip
\noindent {\bf Organization of the Paper.} Section \ref{sec:ac_half-harmonic} is devoted to the proofs of Theorems \ref{thm:cylin_half-harmonic} and \ref{thm:conical_half-harmonic}. In  Section \ref{sec:half-WPIC}, we prove Theorem \ref{thm:half-WPIC-expander} and further analyze the half-PIC case under additional assumptions.

\medskip
\noindent {\bf Acknowledgements.} This paper is based upon work supported by a grant from the Institute for Advanced Study School of Mathematics during the Spring 2026 term. The first author was also supported in part by a Simons Fellowship and a grant from the Simons Foundation. The second author would like to thank Prof. Xiaochun Rong for his constant support and encouragement.

\section{Proof of Theorem \ref{thm:cylin_half-harmonic} and Theorem \ref{thm:conical_half-harmonic}} \label{sec:ac_half-harmonic}
In this section, we study four-dimensional gradient steady and expanding solitons with half-harmonic Weyl curvature and  prove Theorems \ref{thm:cylin_half-harmonic} and \ref{thm:conical_half-harmonic}. 

We begin by recalling two useful facts from Wu--Wu--Wylie \cite{WWW:18}.

\begin{lem} [{\cite[Proposition~3.2]{WWW:18}}] \label{lem:WWW-Delta_F}
	Let $(M^4,g,f)$ be a four-dimensional gradient Ricci soliton. Define
	$ F := f - 2\log R$. Then, whenever $|W^{\pm}| \neq 0 $, we have
	\begin{equation*}
		\begin{split}
			\Delta_{F} \left(\frac{|W^{\pm}|}{R}\right) &\geq 
			\frac{1}{2 |W^{\pm}|\,R^{2}}\Big(
			R^{2} |W^{\pm}|^{2}
			- 36R \det W^{\pm}
			+ 4 |W^{\pm}|^{2} \big|\mathring{{Rc}}\big|^{2} \\
			&\quad - R \big\langle (\mathring{{Rc}}\circ \mathring{{Rc}})^{\pm},
			W^{\pm}\big\rangle
			\Big).
		\end{split}
	\end{equation*}
\end{lem}

\begin{lem} [{\cite[Lemma~3.1 \& Remark 3.1]{WWW:18}}] \label{lem:WWW>=0}
	Let $(M^4,g,f)$ be a four-dimensional gradient Ricci soliton with half-harmonic Weyl curvature
	$ \delta W^{\pm}=0 $. Then, whenever $\nabla f \neq 0 $, we have
	\begin{equation}\label{eq:W-ineq}
		\begin{split}
			R^{2} |W^{\pm}|^{2}
			- 36R \det W^{\pm}
			+ 4 |W^{\pm}|^{2} \big|\mathring{{Rc}}\big|^{2} 
			- R \big\langle (\mathring{{Rc}}\circ \mathring{{Rc}})^{\pm},
			W^{\pm}\big\rangle \geq 0.
		\end{split}
	\end{equation}
	Let $\lambda_{1},\lambda_{2},\lambda_{3},\lambda_{4}$ be the eigenvalues of $ \mathring{Rc}$, with
	corresponding orthonormal eigenvectors $\{e_{1} = {\nabla f}/{|\nabla f|}, e_{2}, e_{3}, e_{4}\}$. Then, the equality in \eqref{eq:W-ineq} holds if and only if one of the following occurs:
	\begin{itemize}
		\item[(1)] $W^{\pm}=0$; or
		\item[(2)] after a possible permutation of $ \{e_{2},e_{3},e_{4}\} $,
		\begin{equation*}
			\lambda_{1} = \lambda_{2} = -\lambda,	\qquad
			\lambda_{3} = \lambda_{4} = \lambda,	\qquad
			R = 4\lambda,
		\end{equation*}
		for some $\lambda$.
	\end{itemize}
\end{lem}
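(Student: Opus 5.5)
The plan is to use the soliton structure to turn the condition $\delta W^{\pm}=0$ into an \emph{algebraic} formula for $W^{\pm}$ in terms of $\nabla f$ and $\mathring{Rc}$, at points where $\nabla f\neq 0$. Substituting this formula reduces \eqref{eq:W-ineq} to a polynomial inequality in the eigenvalues $\lambda_i$ of $\mathring{Rc}$ and in $R$. We treat $\delta W^{+}=0$; the anti-self-dual case follows by reversing orientation.

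\textbf{Step 1 (from $\delta W^{+}=0$ to an algebraic identity).} On any gradient Ricci soliton one has the standard identities
\[
\nabla_i R_{jk}-\nabla_j R_{ik}=R_{ijkl}\nabla_l f, \qquad \nabla R=2\,Rc(\nabla f).
\]
In dimension four, $\delta W$ is a constant multiple of the Cotton tensor
\[
C_{ijk}=\nabla_iR_{jk}-\nabla_jR_{ik}-\tfrac16\big(g_{jk}\nabla_iR-g_{ik}\nabla_jR\big).
\]
Substituting the soliton identities and the decomposition $Rm=W+\tfrac12(Rc\owedge g)-\tfrac{R}{24}\,g\owedge g$ gives
\[
C_{ijk}=W_{ijkl}\nabla_l f+\big(\text{a tensor built from }\mathring{Rc}\text{, } R\text{ and } \nabla f\big),
\]
which is essentially the tensor $D_{ijk}$ of Cao--Chen. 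Taking the self-dual part in the $(i,j)$ slot, $\delta W^+=0$ becomes $\big(W(\nabla f)\big)^+=\big(\text{Ricci terms}\big)^+$.

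\textbf{Step 2 (diagonalize).} Fix an orthonormal frame with $e_1=\nabla f/|\nabla f|$, and use the basis
\[
\omega_a=\tfrac1{\sqrt2}\big(e_1\wedge e_{a+1}+e_{c}\wedge e_{d}\big)
\]
of $\wedge^+$, with $(a+1,c,d)$ a cyclic permutation of $(2,3,4)$. Evaluating the identity of Step 1 for $k=2,3,4$ and using that $W^+$ is trace-free and symmetric, I expect to obtain three conclusions.
\begin{itemize}
\item[(a)] $e_1$ is an eigenvector of $Rc$.
\item[(b)] $Rc$ can be simultaneously diagonalized by $e_2,e_3,e_4$.
\item[(c)] $W^+$ is diagonal in $\{\omega_a\}$, with each eigenvalue $w_a$ an explicit linear combination of $\lambda_1+\lambda_{a+1}$ and $R$.
\end{itemize}
The expected form is $w_a=c\,(\lambda_1+\lambda_{a+1})$ plus trace corrections; any off-diagonal components of $Rc$ must vanish because the left side has no such terms. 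Keeping careful track of the constants here is routine but error-prone. Along the way I will record
\[
\langle(\mathring{Rc}\circ\mathring{Rc})^+,W^+\rangle=\sum_a w_a\,\mu_a, \qquad \mu_a=\lambda_1\lambda_{a+1}+\lambda_c\lambda_d,
\]
together with $|W^+|^2=\sum_a w_a^2$ and $\det W^+=\prod_a w_a$.

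\textbf{Step 3 (the algebraic inequality, the main obstacle).} Substituting Step 2 turns the left side of \eqref{eq:W-ineq} into a homogeneous quartic in $(\lambda_1,\dots,\lambda_4,R)$, subject to $\sum\lambda_i=0$. I would parametrize by
\[
x_a=\lambda_1+\lambda_{a+1}, \qquad \sum_a x_a=2\lambda_1,
\]
and try to write the quartic as a sum of squares. The natural guess is a combination of terms $(R\,w_a-\kappa\, w_b w_c)^2$ and $w_a^2\,|\mathring{Rc}|^2$-type terms. The heuristic is that $-36R\det W^+$ is controlled by $R^2|W^+|^2$ together with the fact that $|W^+|$ is bounded by $|\mathring{Rc}|$ via Step 2. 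The cross term $-R\langle(\mathring{Rc}\circ\mathring{Rc})^+,W^+\rangle$ is absorbed by Cauchy--Schwarz against $4|W^+|^2|\mathring{Rc}|^2$.

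\textbf{Step 4 (equality cases).} Finally, I would read off when every square vanishes. One possibility is that all $w_a=0$, which is $W^+=0$. Otherwise exactly one $x_a\neq0$, and the vanishing of the remaining squares forces, after permuting $\{e_2,e_3,e_4\}$,
\[
\lambda_1=\lambda_2=-\lambda, \qquad \lambda_3=\lambda_4=\lambda, \qquad R=4\lambda.
\]
If a clean sum-of-squares decomposition is hard to find, the fallback is a Lagrange-multiplier minimization on the unit sphere in $(x,R)$-space. This would show that the minimum is $0$ and is attained exactly on these two loci.
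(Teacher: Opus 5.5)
\textbf{Route and Steps 1--2.} Your route is the one behind the cited result of Wu--Wu--Wylie; the paper gives no proof of this lemma, only the citation. One uses $\delta W^{+}=0$ and $D_{ijk}=C_{ijk}+W_{ijkl}\nabla_l f$ to write $W^{+}$ explicitly in terms of the Ricci eigenvalues where $\nabla f\neq0$, and then checks a pointwise algebraic inequality. Steps 1--2 are fine in outline, with two corrections.
\begin{itemize}
\item[(a)] This conclusion needs an argument. Contracting with $\nabla f$ kills the Weyl term and gives $\big(\nabla f\wedge Rc(\nabla f)\big)^{+}=0$, and a decomposable $2$-form with vanishing self-dual part is zero.
\item[(b)] This is then just a choice of frame. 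It is not forced by the identity, so your justification that ``the left side has no such terms'' is not the reason.
\end{itemize}
Carrying Step 2 out uses that $W^{+}$ kills $\wedge^{-}$, so $W^{+}(e_1\wedge e_{a+1})=\tfrac{1}{\sqrt2}W^{+}(\omega_a)$. It gives $w_a=-\tfrac16(\lambda_1+3\lambda_{a+1})$, with no $R$-dependence. The sign is pinned down by $\mathbb{S}^2\times\mathbb{R}^2$: it has $\delta W^{+}=0$, realizes case (2), and has $W^{+}=\operatorname{diag}(\tfrac23,-\tfrac13,-\tfrac13)\lambda$.

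\textbf{The gap is Step 3.} This step is the entire content of the lemma, and you only guess at a sum of squares or fall back on Lagrange multipliers. The heuristic you sketch cannot work, because the inequality is sharp: equality holds in case (2) with $W^{+}\neq0$, so no lossy estimate can prove it.
\begin{itemize}
\item The bound $|W^{+}|\le\tfrac12|\mathring{Rc}|$ is strict in case (2), where $|\mathring{Rc}|^2=6|W^{+}|^2$.
\item A generic Cauchy--Schwarz bound $|\langle(\mathring{Rc}\circ\mathring{Rc})^{+},W^{+}\rangle|\le C|W^{+}||\mathring{Rc}|^{2}$ cannot be absorbed. Viewed as a quadratic in $R$, absorbing it would need $|\mathring{Rc}|\lesssim|W^{+}|$, which fails when $|W^{+}|\ll|\mathring{Rc}|$.
\end{itemize}

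\textbf{How to close it.} You must use the structure exactly. With your normalization $\sum_a w_a\mu_a$, the formula for $w_a$ gives
\[
|\mathring{Rc}|^{2}=4|W^{+}|^{2}+\tfrac43\lambda_1^{2},\qquad \langle(\mathring{Rc}\circ\mathring{Rc})^{+},W^{+}\rangle=12\det W^{+}-\tfrac83\lambda_1|W^{+}|^{2}.
\]
Completing squares in $R$ and then in $\lambda_1$ yields, for $W^{+}\neq0$,
\begin{equation*}
\begin{split}
&R^{2}|W^{+}|^{2}-36R\det W^{+}+4|W^{+}|^{2}|\mathring{Rc}|^{2}-R\langle(\mathring{Rc}\circ\mathring{Rc})^{+},W^{+}\rangle\\
&\quad=|W^{+}|^{2}\Big(R+\tfrac43\lambda_1-\tfrac{24\det W^{+}}{|W^{+}|^{2}}\Big)^{2}+\tfrac{32}{9}|W^{+}|^{2}\Big(\lambda_1+\tfrac{9\det W^{+}}{|W^{+}|^{2}}\Big)^{2}\\
&\qquad+\tfrac{16}{|W^{+}|^{2}}\big(|W^{+}|^{6}-54(\det W^{+})^{2}\big).
\end{split}
\end{equation*}
The last bracket equals $2\prod_{a<b}(w_a-w_b)^{2}\ge0$. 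This is the sharp bound $54(\det W^{+})^{2}\le|W^{+}|^{6}$ for traceless symmetric $3\times3$ matrices, and it is the ingredient your proposal is missing.

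\textbf{Equality case (your Step 4).} Equality with $W^{+}\neq0$ forces a repeated eigenvalue, $W^{+}=\operatorname{diag}(2\sigma,-\sigma,-\sigma)$. The middle square then gives $\lambda_1=-3\sigma$, and the first gives $R=12\sigma$. Solving $w_a=-\tfrac16(\lambda_1+3\lambda_{a+1})$ yields exactly case (2) with $\lambda=3\sigma$. This supplies the derivation that your Step 4 only asserts.
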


We are now ready to establish our classification of four-dimensional asymptotically cylindrical gradient steady Ricci solitons with half-harmonic Weyl curvature.

\begin{proof} [\bf Proof of Theorem \ref{thm:cylin_half-harmonic}]
	First, note that the asymptotically cylindrical assumption (see \cite[Definition on p. 191]{Brendle:14}) implies $R>0$.  

	Without loss of generality, we assume $\delta W^{+}=0$. Then, using Chen--Wang \cite[Theorem 1.1]{Chen-Wang:15}, the proof reduces to establishing the following claim:

	\medskip
	\noindent {\bf Claim 1.}  $W^+\equiv0$ on $M^4$; that is, $(M^4, g, f)$ is half locally conformally flat. 
	\medskip

	\noindent {\bf Proof of Claim 1.} We argue by contradiction. Suppose  that $W^+$ does not vanish identically on $M^4$. Then, whenever  $|W^+| \neq 0$, by Lemmas \ref{lem:WWW-Delta_F} and \ref{lem:WWW>=0}, we have
	\begin{equation} \label{eq:W^+/R}
		\begin{split}
			\Delta_{F} \left(\frac{|W^{+}|}{R}\right)
			&\geq 
			\frac{1}{2 |W^{+}|\,R^{2}}\Big(
			R^{2} |W^{+}|^{2}
			- 36R \det W^{+}
			+ 4 |W^{+}|^{2} \big|\mathring{{Rc}}\big|^{2} \\
			&\quad - R \big\langle (\mathring{{Rc}}\circ \mathring{{Rc}})^{+},
			W^{+}\big\rangle
			\Big) \\
			&\geq 0.
		\end{split}
	\end{equation}
	
	On the other hand, since $M^4$ is asymptotically cylindrical, we have
	\begin{equation*}
		\bar{W} \equiv 0 \quad\&\quad \bar{R}>0,
	\end{equation*}
	where bars denote corresponding curvature quantities on the asymptotic shrinking cylinders $(\rS^{n-1}\times \mathbb{R}, \bar{g}(t))$. In particular, ${|\bar{W}^+|}/{\bar{R}} \equiv 0$.

	Note that the quantity $|W^+|/R \ge 0$ on $M^4$ is scaling invariant. Hence, it attains its maximum at some interior point $p_0 \in M^4$; see also the proof of \cite[Lemma 5.1]{Cao-Xie:25b}. By \eqref{eq:W^+/R} and Calabi's barrier strong maximum principle (see, e.g., \cite[Lemma 2.2]{Cao-Xie:25b}), it follows that $|W^+|/R$ is a constant and
	\begin{equation*}
		R^{2} |W^{+}|^{2}
		- 36R \det W^{+}
		+ 4 |W^{+}|^{2} |\mathring{Rc}|^{2}
		- R \big\langle (\mathring{Rc} \circ \mathring{Rc})^{+}, W^{+} \big\rangle
		\equiv 0
	\end{equation*}
	in a neighborhood $\Omega$ of $p_0$.
	
	Since gradient Ricci solitons are real analytic, the functions $|\nabla f|^{2}$, $|W^{\pm}|^{2}$, and $R$ are real analytic on $M^4$. Therefore, by analyticity and the equality characterization in Lemma~\ref{lem:WWW>=0}, exactly one of the following possible cases holds on $\Omega$ and hence on all of $M^4$: either $\nabla f \equiv 0$, or the second equality case in Lemma~\ref{lem:WWW>=0} holds identically.
	
	\smallskip
	{\bf Case 1.} If $\nabla f \equiv 0$ on $M^{4}$, then $(M^{4},g,f)$ is Ricci-flat, contradicting $R>0$.
	
	\smallskip
	{\bf Case 2.} If $\nabla f \not\equiv 0$ (and $W^{\pm} \not\equiv 0$), then the second equality case in Lemma~\ref{lem:WWW>=0} holds on an open dense subset $U \subset M$. In this case, by Lemma~\ref{lem:WWW>=0}~(2), we have
	\begin{equation*}
		R_{11} = \mathring{R}_{11} + \tfrac{R}{4} = -\lambda+\lambda =0.
	\end{equation*}
	Since $\nabla f$ is an eigenvector of $Rc$ \cite[Lemma 2.4]{WWW:18} (see also Lemma~\ref{lem:WWW>=0}), it follows that 
	\begin{equation*}
		\nabla R = 2 Rc (\nabla f)
		= 2 R_{11} |\nabla f|\, e_{1}
		= 0
	\end{equation*}
	on $U$, and therefore $R$ is constant on $U$. By continuity, $R$ is constant on all of $M^4$. On the other hand, with $\rho=0$ for the steady soliton, we have	
	\begin{equation*}
		0 \equiv \Delta_f R = 2\rho R - 2 |Rc|^2 = -2 |Rc|^2,
	\end{equation*}
	contradicting the assumption that $(M^4,g,f)$ has positive scalar curvature.

	This proves {\bf Claim 1}, and Theorem~\ref{thm:cylin_half-harmonic}  follows. 
\end{proof}

The proof of Theorem~\ref{thm:conical_half-harmonic} is similar; for the reader's convenience, we sketch it below.

\begin{proof} [\bf Proof of Theorem \ref{thm:conical_half-harmonic}]
	First, observe that the asymptotic cone of $(M^4,g,f)$ is a non-flat Euclidean cone and hence has positive scalar curvature. It then follows from the strong maximum principle together with  \cite[Theorem~1.6]{Chan:23} that $(M^4,g,f)$ itself has positive scalar curvature, i.e., $R>0$ on $M^4$.
	
	Next, by an argument analogous to that used in the proof of Theorem~\ref{thm:cylin_half-harmonic}, together with \cite[Lemma A.2]{Cao-Xie:25b}, exactly one of the following holds on all of  $M^4$: either $\nabla f \equiv 0$, or $W^{+} \equiv 0$, or the second equality case in Lemma~\ref{lem:WWW>=0} holds identically.
	
	\smallskip
	{\bf Case 1.} If $\nabla f \equiv 0$ on $M^4$, then $(M^{4},g,f)$ is Einstein with negative Einstein constant, contradicting the fact that $(M^4, g)$ has positive scalar curvature.
	
	\smallskip
	{\bf Case 2.} If $W^{+} \equiv 0$ on $M^4$, then by \cite[Corollary 1.1 \& Remark~1.3]{Cao-Xie:25b}, $(M^4,g,f)$ has positive curvature operator and is rotationally symmetric.
	
	\smallskip
	{\bf Case 3.} If $\nabla f \not\equiv 0$ and $W^{\pm} \not\equiv 0$, then, an argument similar to that in Case~2 of the proof of Theorem~\ref{thm:cylin_half-harmonic} shows that $R$ is constant on all of $M^4$, hence
	\begin{equation*}
		0 \equiv \Delta_f R = 2\rho R - 2 |Rc|^2,
	\end{equation*}
	where $\rho<0$, again contradicting  $R>0$ on $M^4$. 

	Therefore, only {\bf Case 2} can occur, and Theorem~\ref{thm:conical_half-harmonic} follows.
\end{proof}

\begin{rmk} \label{rmk:half-harmonic_ALCF}
	By examining the conditions at infinity used in the proofs of Theorems \ref{thm:cylin_half-harmonic} and \ref{thm:conical_half-harmonic}, we see that the asymptotic assumptions in Theorem \ref{thm:cylin_half-harmonic} and in Theorem \ref{thm:conical_half-harmonic} can be replaced by the following weaker condition of {\em asymptotically half locally conformally flat}:   
\end{rmk}

\begin{defn} \label{defn:ALCF}
	A four-manifold $(M^4,g)$ is said to be asymptotically half locally conformally flat if it has positive scalar curvature $R>0$ and, for any sequence of points $\{p_i\} \subset M^4$, with $p_i \rightarrow \infty$,  satisfies
	\begin{equation*}
		\lim_{i\rightarrow \infty} \frac{|W^{+}|}{R} (p_i)= 0 \quad ({\rm or}\  \lim_{i\rightarrow \infty} \frac{|W^{-}|}{R} (p_i)= 0), 
	\end{equation*}
	where  $W^{+}$ (or $W^{-}$) denotes the self-dual (or the anti-self-dual) part of the Weyl tensor.
\end{defn}

\section{Four-dimensional Ricci expanders with half-WPIC \& half-PIC} \label{sec:half-WPIC}

In this section, we study the classification of four-dimensional gradient  expanding solitons with half-WPIC and half-PIC. In particular, we prove Theorem \ref{thm:half-WPIC-expander} and obtain partial classification results for expanding and steady solitons with half-PIC under additional conditions; see Theorems \ref{thm:conical_half-PIC} and \ref{thm:cylin_half-PIC}.

For the reader's convenience, we restate Theorem \ref{thm:half-WPIC-expander} with a more explicit formulation of part (iii).

\begin{thm}  \label{thm:halfWPIC}
	Let $(M^4, g, f)$ be a four-dimensional complete noncompact, non-flat, gradient expanding Ricci soliton with half-nonnegative isotropic curvature. Then, either
	\begin{itemize}
		\item[(i)] $(M^4, g, f)$ has half-positive isotropic curvature, or
		\smallskip
		
		\item[(ii)] $(M^4, g, f)$ is a locally irreducible expanding K\"ahler-Ricci soliton, or
		\smallskip
		
		\item[(iii)] $(M^4, g, f)$ is isometric to a quotient of either $N^3\times \R$, where $N^3$ is a $3$-dimensional expanding Ricci soliton, or $(\R^2,g_6(\nu)) \times \R^2$, or $(\R^2,g_6(\nu)) \times (\R^2,g_6(\nu))$, or $(\R^2,g_6(\nu)) \times (\R^2,g_7(\nu))$, or $(\R^2,g_6(\nu)) \times (\R_{\ast}^2,g_8(\nu))$, where $(\R^2,g_6(\nu))$, $(\R^2,g_7(\nu))$, and $(\R_{\ast}^2,g_8(\nu))$ are the two-dimensional complete gradient expanding Ricci solitons in \cite[Theorem 1]{Bernstein-Mettler:15}; see also \cite{Ramos:18}.
	\end{itemize}
\end{thm}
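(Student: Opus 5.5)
Without loss of generality assume $A$ (the self-dual block) is weakly $2$-positive, i.e. $a_1+a_2\ge 0$, where $a_1\le a_2\le a_3$ are the eigenvalues of $A=W^++\frac{R}{12}I$. The plan is to run Hamilton's strong maximum principle for the Ricci flow generated by the expander, applied to the cone of weakly $2$-positive self-adjoint operators on $\wedge^+$. Following Hamilton \cite{Ha:97}, the condition ``$A$ is $2$-nonnegative'' is preserved by the ODE $\frac{d}{dt}A = A^2+2A^\#+BB^t$, since $BB^t\ge 0$ and the $A$-part is the standard curvature ODE in dimension three. The strong maximum principle (as in \cite{Cao-Xie:25}) then shows that either $a_1+a_2>0$ everywhere, which is case (i), or $a_1+a_2\equiv 0$ on $M^4$. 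The self-similar flow $g(t)$ exists on $(0,\infty)$ and is pulled back by diffeomorphisms, so the dichotomy holds at a single time slice.

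In the degenerate case $a_1+a_2\equiv 0$, I would extract algebraic and holonomy information from the vanishing of the reaction terms in the null direction. The minimizing $2$-plane in $\wedge^+$ is spanned by the eigenvectors $\omega_1,\omega_2$, and invariance of the zero set under parallel transport (the Hamilton-type argument) forces the following:
\begin{itemize}
\item[(a)] $\langle (A^2+2A^\#)\omega_i,\omega_i\rangle$ summed over $i=1,2$ vanishes, i.e. $a_1^2+a_2^2+2a_3(a_1+a_2)=0$, so that $a_1=a_2=0$;
\item[(b)] $|B^t\omega_1|^2+|B^t\omega_2|^2=0$;
\item[(c)] the distribution $\operatorname{span}\{\omega_1,\omega_2\}$ is parallel.
\end{itemize}
Here I invoke the Micallef--Wang observation \cite{Micallef-Wang:93}: a parallel rank-two subbundle of $\wedge^+$ forces its orthogonal complement, the line spanned by $\omega_3$, to be parallel as well. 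Then $\omega_3$, suitably normalized, is a parallel self-dual $2$-form, hence a K\"ahler form. So $(M^4,g)$ (or a double cover) is K\"ahler. Since $\nabla^2 f = \rho g - Rc$ is $J$-invariant, the soliton is a K\"ahler--Ricci soliton, with $a_1=a_2=0$ and $a_3=R/4$.

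Next I split according to the local holonomy. If the K\"ahler metric is locally irreducible, we are in case (ii). Otherwise de Rham decomposition of the universal cover gives either $N^3\times\R$ or $\Sigma_1\times\Sigma_2$. Here $\Sigma_1,\Sigma_2$ are surfaces, and the soliton structure splits because $\nabla f$ decomposes along the factors. On a product of surfaces the curvature operator is diagonal with entries $K_1$ and $K_2$. In the $\wedge^\pm$ basis, $A$ has eigenvalues $0,0,\tfrac{K_1+K_2}{2}$ (up to normalization), and half-WPIC amounts to $K_1+K_2\ge 0$ on the other side. Combined with the condition on $C$ and the factors being gradient expanders, this forces each factor to have $K_i\ge 0$, or one of them to be flat. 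For the $N^3\times\R$ case there is no further restriction beyond $N^3$ being an expander. Finally I would invoke the Bernstein--Mettler/Ramos classification \cite{Bernstein-Mettler:15, Ramos:18} to list the two-dimensional expanders with the required curvature sign. For the product cases this list should give $g_6(\nu)$ for one factor, and $g_6,g_7,g_8$ or flat $\R^2$ for the other.

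The main obstacle is the strong maximum principle step together with deriving parallelism of the null plane in $\wedge^+$ rather than just a degenerate eigenvalue. This requires care because the cone is not smooth where $a_2=a_3$. I would first try to adapt verbatim the argument of \cite{Cao-Xie:25} for steady solitons, which works via Hamilton's strong maximum principle for systems. That argument should carry over since it is purely local in spacetime and does not use the sign of $\rho$. The second delicate point is matching the curvature-sign conditions to the precise list in \cite{Bernstein-Mettler:15}.
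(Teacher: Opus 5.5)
Your proposal is correct. Its analytic core is the same as the paper's, but you turn the degenerate case into geometry by a different route.

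\textbf{The shared core.} The paper encodes half-WPIC, following Micallef--Wang, as $P^{+}=\frac{R}{4}I-A=(\tr A)I-A\ge 0$. It then quotes \cite[Proposition 4.6]{Micallef-Wang:93} for the parallel invariance of $\ker P^{+}$; in your notation this kernel is the line spanned by $\omega_3$. That reformulation is the real Micallef--Wang observation. (That orthogonal complements of parallel subbundles are parallel is elementary and needs no citation.) It also removes the obstacle you flag. $P^{+}\ge 0$ is ordinary nonnegativity of a symmetric tensor, and its kernel is well defined even when $a_2=a_3$ and your minimizing $2$-plane is not unique. So Hamilton's tensor strong maximum principle applies directly. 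Your items (a) and (b) are correct computations of the reaction term, but they are not needed.

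\textbf{Where the routes split.} The paper goes through Berger's list.
\begin{itemize}
\item[$\cdot$] For $\mathrm{Hol}^0=\mathrm{SO}(4)$, a loop carries a null vector of $P^{+}$ onto its top eigenvector. This forces $R(p)=0$, and then flatness via Petersen--Wylie.
\item[$\cdot$] The symmetric, $\mathrm{SU}(2)$, $\mathrm{U}(2)$ and reducible cases are handled one by one.
\end{itemize}
You instead note that a parallel line in $\wedge^{+}$ gives, locally or on a double cover, a parallel self-dual form and hence a K\"ahler structure. This bypasses Berger's theorem and the symmetric and hyperk\"ahler subcases. It also gives a slightly sharper statement: every non-half-PIC example is locally K\"ahler, so $N^3\times\R$ can fail half-PIC only if $N^3$ itself splits.

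\textbf{Two loose ends.}
\begin{itemize}
\item[$\cdot$] Where $R=0$ there is no distinguished line. But there $P^{+}=0$, and constancy of the rank of $\ker P^{+}$ then gives $R\equiv 0$, hence flatness.
\item[$\cdot$] In the product case, your justification that each $K_i\ge 0$ is vague. The clean reason is that half-WPIC reads $\inf K_1+\inf K_2\ge 0$. Meanwhile $\inf K_i\le 0$ for every complete two-dimensional expander: by Bonnet--Myers if it is noncompact, and because compact expanders are hyperbolic otherwise. After that you should not guess the list. You only need to check that the nonnegatively curved complete expanders of \cite{Bernstein-Mettler:15} appear among those named in (iii). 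The paper likewise takes the whole reducible case from \cite{Cao-Xie:25}.
\end{itemize}
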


\begin{proof} 
	It is well known that the half-WPIC condition implies that the scalar curvature satisfies $R \geq 0$.  
	Following Micallef--Wang \cite{Micallef-Wang:93}, we consider the quantity
	\begin{equation*}
		P^{\pm} := \tfrac{R}{6}I - W^{\pm}.
	\end{equation*}
	From the curvature decomposition \eqref{eq:Rmdecomp}, it is easy to see that 
	\[
	P^{+} = \tfrac{R}{4}I - A, 
	\qquad 
	P^{-} = \tfrac{R}{4}I - C,
	\qquad 
	\tr(P^{\pm}) = \tfrac{R}{2}.
	\]
	Moreover, the half-WPIC (respectively, half-PIC) is equivalent to requiring either $P^{+} \geq 0$ (respectively, $P^{+} > 0$) or $P^{-} \geq 0$ (respectively, $P^{-} > 0$).  
	
	Without loss of generality, we assume $P^{+} \geq 0$ and denote its  eigenvalues by $0\leq P^{+}_1 \leq P^{+}_2 \leq P^{+}_3$. Next, consider the canonical Ricci flow
	$$g(t)=(1+t)\Phi(t)^{\ast}(g), \quad t\in [0,1], \quad g(0)=g,$$
 	induced by the expanding soliton $(M^4,g,f)$.
	By the proof of \cite[Proposition 4.6]{Micallef-Wang:93},
	there exists a positive constant $0<\delta<1$ such that, for any fixed $\tau \in (0,\delta)$, $\ker (P^{+}(\tau ))$ with respect to $g(\tau)$ is invariant under parallel translation and constant in time. It follows that the rank of $P^{+}= P^{+}(0)$ is locally constant, and $\ker(P^{+})$ is invariant under parallel translation with respect to $g$.
	
	Now,  the key step in the proof is to establish the following claim:
	\medskip

	\noindent{\bf Claim 2.} If the holonomy group $\text{Hol}^{0} (M^4, g)$ is $\text{SO}(4)$, then $(M^4, g)$ has half-PIC. 

	\medskip
	\noindent {\bf Proof of Claim 2.}
	We argue by contradiction, following the proof of \cite[Theorem 1.2]{Cao-Xie:23}. Suppose that there exist a point $p\in M^4$ and a non-zero $\vphi_1^{+} \in \wedge^{+}_p(M)$ such that
		$$ P^{+} (\vphi_1^{+},\vphi_1^{+}) = 0. $$ 
	Since  $P^{+} \geq 0$, it follows that $\vphi_1^{+}$ is a null eigenvector corresponding to the smallest eigenvalue $P_1^{+}=0$ at $p$. By \cite[Lemma 6.1]{Derdzinski:00}, we may write
	\begin{equation*}
		\vphi_1^{+} = \tfrac{1}{\sqrt{2}}\left( e_1\wedge e_2 \pm e_3\wedge e_4\right)
	\end{equation*}
	for some positively oriented orthonormal frame $\{e_1, e_2, e_3, e_4\}$.

	Now, let $\vphi_3^{+}\in \wedge^{+}_p(M)$ be an eigenvector corresponding to the largest eigenvalue $P_3^{+}$.  Again by \cite[Lemma 6.1]{Derdzinski:00},  there exists  another positively oriented orthonormal frame $\{v_1, v_2, v_3, v_4\}$ such that
	\begin{equation*}
		\vphi_3^{+} = \tfrac{1}{\sqrt{2}}\left( v_1\wedge v_2 \pm v_3\wedge v_4\right). 
	\end{equation*}		
	Since $\text{Hol}^{0} (M^4, g)=\text{SO}(4)$, there exists a closed loop $\gamma$ based at $p$ such that 
	$ v_i = P_{\gamma}e_i$ for $ i=1, \cdots, 4,$
	where $P_{\gamma}$ denotes parallel transport along $\gamma$. Using the invariance of $\ker(P^{+})$, we obtain
	$$P_3^{+}=P^{+}(\vphi_3^{+},\vphi_3^{+}) = P^{+}(\vphi_1^{+},\vphi_1^{+})=P_1^{+}=0.$$ 
	Hence, 
	$$R=2(P_1^{+}+P_2^{+}+P_3^{+})=0 \quad {\rm at} \ p.$$ 

	Since $R\geq 0$ on $M^4$, it attains its minimum value $0$ at $p\in M^4$. By the proof of \cite[Proposition 3.2]{Petersen-W:09}, $(M^4,g,f)$ is Ricci-flat. Then \cite[Proposition 3.1]{Petersen-W:09} implies that $(M^4,g,f)$ is flat, contradicting the non-flatness assumption of $(M^4,g,f)$. Therefore, $P^{+}>0$ and {\bf Claim 2} follows. 
	
	\smallskip
	The remainder of the proof proceeds as in that of \cite[Theorem~1.4]{Cao-Xie:25}.
	
	\smallskip
	{\bf Case 1:} $(M^4,g,f)$ is locally reducible.  

	Since $(M^4, g, f)$ is non-flat, it must be a finite quotient of either $N^3\times \R$, or $(\R^2,g_6(\nu)) \times \R^2$, or $(\R^2,g_6(\nu)) \times (\R^2,g_6(\nu))$, or $(\R^2,g_6(\nu)) \times (\R^2,g_7(\nu))$, or $(\R^2,g_6(\nu)) \times (\R_{\ast}^2,g_8(\nu))$. Here, $N^3$ is  a three-dimensional expanding soliton, while $(\mathbb{R}^2,g_6(\nu))$, $(\mathbb{R}^2,g_7(\nu))$, and $(\mathbb{R}^2_\ast,g_8(\nu))$ are the two-dimensional complete gradient expanding solitons described in \cite[Theorem~1]{Bernstein-Mettler:15}.
	
	\smallskip
	{\bf Case 2:} $(M^4,g,f)$ is locally irreducible and symmetric. 

	Then $(M^4,g)$ is  Einstein with negative scalar curvature, contradicting $R\geq 0$ (due to half-WPIC). Hence  this case cannot occur. 
	
	\smallskip
	{\bf Case 3:} $(M^4,g,f)$ is locally irreducible and non-symmetric.  

	By Berger's holonomy classification, one of the following holds: 

	\begin{itemize}
		\item[(3a)] 
		If $\text{Hol}^{0} (M^4, g)=\text{SO}(4)$ then by {\bf Claim 2}, $(M^4,g,f)$  has half-PIC.  

		\smallskip
		\item[(3b)] 
		If $\text{Hol}^{0} (M^4, g)=\text{U}(2)$, then $(M^4,g, f)$ is K\"ahler. 

		\smallskip
		\item[(3c)] If $\text{Hol}^{0} (M^4, g)=\text{SU}(2)$, then $(M^4,g,f)$ is Calabi-Yau and hence Ricci-flat. Since Ricci-flat expanding solitons are flat (\cite[Proposition 3.1]{Petersen-W:09}), this contradicts the non-flatness assumption.
	\end{itemize}	

	This completes the proof of Theorem \ref{thm:halfWPIC}.
\end{proof}

\begin{rmk}
	An alternative proof can be obtained using Wilking's extension \cite[Theorem~A.1]{Wilking:13} of the Bony-type maximum principle of Brendle--Schoen \cite{Brendle-S:08}. 
\end{rmk}

\begin{proof} [{\bf Alternative proof of Claim 2.}]
	Without loss of generality, we assume that $(M^4,g)$ has WPIC on self-dual $2$-forms, i.e., $A$ is 2-nonnegative. Following \cite{Richard-Seshadri:16}, we define
	\begin{equation*}
		S := \{ \varphi \in \mathfrak{so}(4,\mathbb{C}) \mid \tr(\varphi^2)=0,\ \varphi \ \text{corresponds to a self-dual $2$-form} \}.
	\end{equation*}
	Then,
	\begin{equation*}
		\mathcal{C}(S) := \{ Rm \in S^2_B(\mathfrak{so}(4,\mathbb{C})) \mid \langle Rm(\varphi),\varphi \rangle \geq 0 \ \text{for all } \varphi \in S \}
	\end{equation*}
	defines a Ricci flow invariant curvature cone of half-WPIC (see \cite[Propositions~2.11 and 2.12]{Richard-Seshadri:16}).  
	By   \cite[Theorem~A.1]{Wilking:13}, and the fact that $(M^4,g,f)$ generates a self-similar solution to the Ricci flow, it follows that $\ker(Rm)\subset S$ is invariant under parallel translation when we restrict $Rm$ to $S$.  
	
	Next, we show that if  $\mathrm{Hol}^{0}(M^4,g)$ is $\mathrm{SO}(4)$, then $(M^4,g,f)$ has half-PIC.  
	We proceed by contradiction. Restrict $Rm$ to $S$ and denote its eigenvalues by $0\leq \alpha_1 \leq \alpha_2 \leq \alpha_3$.  
	Suppose that there exists a point $p \in M^4$ and a nonzero $\varphi_1 \in S$ such that
	\[
	\langle Rm(\varphi_1), \varphi_1 \rangle = 0.
	\]
	Then, it follows that $\varphi_1$ is a null eigenvector of $Rm$ corresponding to the smallest eigenvalue $\alpha_1=0$.  
	By the proof of \cite[Proposition~2.11]{Richard-Seshadri:16}, we can write
	\[
	\varphi_1 = (e_1 + i e_2)\wedge(e_3 + i e_4)
	\]
	for some positively oriented orthonormal frame $\{e_1,e_2,e_3,e_4\}$.
	
	Now let $\varphi_3 \in S$ be an eigenvector corresponding to the largest eigenvalue $\alpha_3$ at $p$.  
	Again by the proof of \cite[Proposition~2.11]{Richard-Seshadri:16}, there exists another positively oriented orthonormal frame $\{v_1,v_2,v_3,v_4\}$ such that
	\[
	\varphi_3 = (v_1+i v_2)\wedge(v_3+i v_4).
	\]
	Since $\mathrm{Hol}^{0}(M^4,g)=\mathrm{SO}(4)$, there exists a closed loop $\gamma$ based at $p$ such that 
	\[
	v_i = P_{\gamma} e_i, \quad i=1,\cdots,4,
	\]
	where $P_{\gamma}$ denotes parallel transport along $\gamma$.  
	It then follows from the invariance of $\ker(Rm)\subset S$ that
	\[
	\alpha_3 = \langle Rm(\varphi_3),\varphi_3 \rangle 
	= \langle Rm(\varphi_1),\varphi_1 \rangle 
	= \alpha_1 = 0.
	\]
	Hence, at $p$, 
	\[
	R = 2(\alpha_1+\alpha_2+\alpha_3) = 0,
	\]
	a contradiction. 

	Thus, $(M^4,g,f)$ has half-PIC.
\end{proof}

\begin{rmk}
	The above two methods can also be applied to classify four-dimensional complete gradient shrinking and steady Ricci solitons with half-WPIC previously established in \cite[Theorem 1.2]{Cao-Xie:23} and \cite[Theorem 1.3]{Cao-Xie:25}.
\end{rmk}

Next, we consider a special class of gradient expanding solitons with half-PIC.

\begin{thm} \label{thm:conical_half-PIC}
	Let $(M^4,g,f)$ be a four-dimensional complete, noncompact, asymptotically conical gradient expanding Ricci soliton with half-positive isotropic curvature. If $M^4$ is asymptotic to a non-flat Euclidean cone and its Ricci tensor has an eigenvalue with multiplicity three, then it has positive curvature operator and is rotationally symmetric.
\end{thm}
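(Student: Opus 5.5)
The plan is to reduce Theorem~\ref{thm:conical_half-PIC} to Theorem~\ref{thm:conical_half-harmonic}, or directly to the half locally conformally flat case \cite[Corollary 1.1 \& Remark 1.3]{Cao-Xie:25b}. To do this, I will show that the two extra hypotheses force half-harmonic Weyl curvature, and in fact $W^{+}\equiv 0$ (assuming $P^{+}>0$ without loss of generality).

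First I record positivity of scalar curvature. Half-PIC gives $\tr P^{+}=R/2>0$ pointwise, so $R>0$ everywhere. Alternatively, this follows as in the proof of Theorem~\ref{thm:conical_half-harmonic} from the non-flat cone and \cite[Theorem~1.6]{Chan:23}.

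Next I use the Ricci eigenvalue structure. On the open set where $\nabla f\neq 0$, $\nabla f$ is an eigenvector of $Rc$ \cite[Lemma 2.4]{WWW:18}. The multiplicity-three hypothesis together with analyticity means that, on an open dense set, either
\begin{itemize}
\item[(a)] $\nabla f$ spans the simple eigenspace, so $Rc=\mu\, e_1\otimes e_1+\nu(g-e_1\otimes e_1)$; or
\item[(b)] $\nabla f$ lies in the three-dimensional eigenspace.
\end{itemize}
In case (a), I would follow the standard argument of Cao--Chen/Catino--Mantegazza type. Level sets of $f$ are totally umbilic, since $\nabla^2 f=\rho g-Rc$ has the same eigenspaces. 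The quantities $R$, $|\nabla f|$, $\mu$, and $\nu$ are constant on level sets, using $\nabla R=2Rc(\nabla f)$ and the soliton identities. With the Codazzi-type equations this yields a warped product structure $dr^2+\phi(r)^2 g_{N}$ locally, where $g_N$ is Einstein. The Weyl tensor of such a warped product is determined by the Weyl tensor of the three-dimensional Einstein fiber, which vanishes. Hence $W\equiv 0$, in particular $W^{+}\equiv 0$, and the cited Corollary gives rotational symmetry and positive curvature operator.

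Case (b) has to be excluded. Here $Rc(\nabla f)=\nu\nabla f$, and on the complementary line $Rc=\mu$. I would try to show this forces the soliton to split, or forces $R$ to be constant, as in Case 2 of the proof of Theorem~\ref{thm:cylin_half-harmonic}; the latter contradicts $\Delta_f R=2\rho R-2|Rc|^2$ with $R>0$. A route via a Derdzinski-type argument should work: a Ricci tensor with an eigenvalue of multiplicity three has $\mathring{Rc}=\lambda(3\,\xi\otimes\xi-g)$ for a unit field $\xi\perp\nabla f$. The decomposition \eqref{eq:Rmdecomp} then gives $B$ of rank one, and the contracted second Bianchi identity $\delta W^{\pm}$ can be expressed through $\nabla\mathring{Rc}$ and $\mathring{Rc}(\nabla f)$. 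Half-PIC (via $P^{+}>0$) would be used to rule out the degenerate configuration, for instance by showing $\xi$ must be parallel, so that $M$ is locally reducible, which is incompatible with asymptotically conical with a non-flat Euclidean cone.

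Once $W^{+}\equiv0$ or $\delta W^{+}=0$ is obtained globally, I conclude via Theorem~\ref{thm:conical_half-harmonic}.

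The main obstacle is case (b), and more generally showing that the multiplicity-three eigenspace is orthogonal to $\nabla f$ and integrable. I expect the proof to spend most effort there, possibly using the asymptotic cone (where the limit Ricci is $(0,\lambda,\lambda,\lambda)$ with $0$ in the radial direction) to pin down which configuration occurs near infinity and then propagating by analyticity.
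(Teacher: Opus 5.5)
\textbf{There is a genuine gap.} Your case split (a)/(b) rests on $\nabla f$ being an eigenvector of $Rc$, which you take from \cite[Lemma 2.4]{WWW:18}. That fact is a consequence of half-harmonic Weyl curvature ($\delta W^{\pm}=0$), and this is exactly how the paper uses it in Section 2. It is not among the hypotheses of this theorem. For a soliton with $Rc=\nu g+(\mu-\nu)\xi\otimes\xi$, nothing in your argument prevents $\nabla f$ from having nonzero components both along $\xi$ and in $\xi^{\perp}$, so (a) and (b) do not exhaust the possibilities.

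Beyond that, case (b) and the mixed case are only heuristics. The Derdzinski-type route, the claim that $\xi$ must be parallel, and the incompatibility of local reducibility with a non-flat cone are all unproved, and you acknowledge this is the main obstacle. Your case (a) is fine: umbilic level sets, a warped product over a three-dimensional Einstein (hence constant curvature) fiber, so $W\equiv 0$. But it is the easy part. Note also that half-PIC and the cone asymptotics do no real work in your argument beyond giving $R>0$. That signals the essential mechanism is missing.

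\textbf{The missing idea.} Use the multiplicity-three hypothesis at the level of the curvature operator, not through the direction of $\nabla f$. Regard $B=\mathring{Rc}$ as a map between $\wedge^{\pm}$. A traceless Ricci tensor with eigenvalues $(3a,-a,-a,-a)$ gives equal singular values $B_1=B_2=B_3$. This kills the $B$-terms in the elliptic inequality from \cite{Cao-Xie:23} for the pinching quantity $(C_3-C_1)/R$, where $C$ is the $2$-positive block. The result is
\[
\Delta_F \frac{C_3-C_1}{R}\ \ge\ \frac{2}{R^2}\Big[(C_3-C_1)|\mathring{Rc}|^2+3R(C_3-C_1)C_2\Big]\ \ge\ 0,
\]
with $C_2>0$ by half-PIC.

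On a non-flat Euclidean cone all eigenvalues of $\bar A,\bar B,\bar C$ coincide, so $(C_3-C_1)/R\to 0$ at infinity. Hence its maximum is attained at an interior point. The strong maximum principle then forces the bracket to vanish near that point. Since $C_2>0$ and $R>0$, this gives $C_3=C_1$ there, so $W^-\equiv 0$ locally and then globally by analyticity. Then \cite[Corollary 1.1 \& Remark 1.3]{Cao-Xie:25b} finishes the proof. In your normalization $P^{+}>0$, the same argument applies to $(A_3-A_1)/R$ and yields $W^{+}\equiv 0$.
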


\begin{proof}
	Under the curvature operator decomposition \eqref{eq:Rmdecomp}, let 
	$$A_1 \leq A_2 \leq A_3, \qquad C_1 \leq C_2 \leq C_3$$ 
	be the eigenvalues of $A$ and $C$ respectively, and let $0 \leq B_1 \leq B_2 \leq B_3$ be the singular eigenvalues of $B$. Without loss of generality, we assume $C$ is 2-positive, i.e., $C_1+C_2>0$. In particular, $C_2>0$ on $M^4$. 
	
	As in the proof of \cite[Theorem 1.3]{Cao-Xie:23},  the assumption that the Ricci tensor has an eigenvalue of multiplicity three implies
	\begin{equation} \label{eq:B1=B2=B3_conical}
		B_1 = B_2 = B_3.
	\end{equation}
	Moreover, it follows from the proof of \cite[Remark 4.2]{Cao-Xie:23} that
	\begin{equation*}
		\begin{split}
			\Delta_F \frac{C_3 - C_1}{R}
			&\geq
			\frac{2}{R^2}
			\left[
			(C_3 - C_1)\, |\mathring{Rc}|^2
			+ 3R(C_3 - C_1)C_2
			\right]
			\geq 0.
		\end{split}
	\end{equation*}
	
	On the other hand, since the asymptotic cone $\mathcal{C}$ is a non-flat Euclidean cone, \cite[Equation (A.9)]{Cao-Xie:25b} and \cite[Equation (A.10)]{Cao-Xie:25b} imply that 
	\begin{equation*}
		\bar{A}_i = \bar{C}_j = \bar{B}_k, \quad 1 \le i,j,k \le 3,
	\end{equation*}
	where bars denote quantities on the asymptotic cone $\mathcal{C}$. In particular,
	\begin{equation*}
			\frac{\bar{C}_3 - \bar{C}_1}{\bar{R}} = 0.
	\end{equation*}
	Since $(C_3 - C_1)/R \ge 0$ on $M^4$, it attains its maximum at some interior point $p_0 \in M^4$.
	By \eqref{eq:B1=B2=B3_conical} and Calabi's barrier strong maximum principle \cite{Calabi:58},  it follows that $(C_3 - C_1)/R$ is constant and
	\begin{equation*}
		(C_3 - C_1) |\mathring{Rc}|^2 + 3R(C_3 - C_1) C_2 \equiv 0
	\end{equation*}
	in a neighborhood $\Omega$ of $p_0$. 

	Thus, either $C_3 = C_1$, or $\mathring{Rc} = 0$ and $C_2 = 0$ on $\Omega$.  The latter is ruled out by the half-PIC assumption. Hence $C_3 = C_1$ on $\Omega$, which implies $W^- \equiv 0$ on $\Omega$, and therefore on all of $M^4$ by analyticity. 

	Hence, $(M^4, g, f)$ is half locally conformally flat. By \cite[Corollary 1.1 \& Remark 1.3]{Cao-Xie:25b}, it has positive curvature operator and is rotationally symmetric.
\end{proof}

Finally, using an analogous argument, we also obtain a partial classification of asymptotically cylindrical gradient steady solitons with half-PIC.

\begin{thm} \label{thm:cylin_half-PIC}
	Let $(M^4,g,f)$ be a four-dimensional complete, noncompact, asymptotically cylindrical gradient steady Ricci soliton with half-positive isotropic curvature. If the Ricci tensor of $M^4$ has an eigenvalue with multiplicity three, then it is rotationally symmetric and hence isometric to the Bryant soliton.
\end{thm}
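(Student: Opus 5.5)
The plan is to mimic the proof of Theorem~\ref{thm:conical_half-PIC}, with the asymptotically conical input replaced by the asymptotically cylindrical one, as in the proof of Theorem~\ref{thm:cylin_half-harmonic}. Write the eigenvalues of $A$, $C$ and the singular values of $B$ as before, and assume without loss of generality that $C$ is 2-positive, so $C_1+C_2>0$ and in particular $C_2>0$. The asymptotically cylindrical assumption gives $R>0$. The multiplicity-three eigenvalue of the Ricci tensor yields $B_1=B_2=B_3$ exactly as in \cite[Theorem 1.3]{Cao-Xie:23}; this step is pointwise algebra and does not depend on $\rho$. The computation behind \cite[Remark 4.2]{Cao-Xie:23} then gives, with $F=f-2\log R$,
\[
\Delta_F \frac{C_3-C_1}{R} \geq \frac{2}{R^2}\Big[(C_3-C_1)|\mathring{Rc}|^2 + 3R(C_3-C_1)C_2\Big] \geq 0 .
\]
I would check that this evolution computation only uses $\Delta_f Rm = 2\rho Rm - Q(Rm)$ and $\Delta_f R = 2\rho R-2|Rc|^2$. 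In the quotient by $R$ the $\rho$-terms cancel, so the same inequality holds in the steady case.

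Next I would use the behavior at infinity. Along any sequence $p_i\to\infty$, the rescaled pointed limits are the shrinking cylinders $\rS^3\times\R$. There $\bar W\equiv 0$, so $\bar C_1=\bar C_2=\bar C_3$ and $\bar R>0$. Because $(C_3-C_1)/R$ is scale invariant, it tends to $0$ at infinity. Hence either it vanishes identically or it attains a positive maximum at an interior point $p_0$, as argued in the proof of Theorem~\ref{thm:cylin_half-harmonic}. The eigenvalues $C_i$ may fail to be smooth where they coincide, so I would apply Calabi's barrier strong maximum principle \cite{Calabi:58} as in the conical case. This shows $(C_3-C_1)/R$ is constant near $p_0$ and that the right-hand side vanishes there. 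Since $C_2>0$ by half-PIC and $R>0$, this forces $C_3=C_1$ near $p_0$. Then $W^-=0$ near $p_0$, and by real analyticity $W^-\equiv0$ on $M^4$.

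Thus $(M^4,g,f)$ is a half locally conformally flat steady soliton. Chen--Wang \cite[Theorem 1.1]{Chen-Wang:15} then implies that it is either locally conformally flat and hence rotationally symmetric, or among the other possibilities in their list. Positive scalar curvature together with the asymptotically cylindrical behavior rules out the flat and Ricci-flat alternatives. The conclusion is that it is the Bryant soliton up to scaling, which is also consistent with Brendle \cite{Brendle:14}.

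I expect the main obstacle to be verifying that the inequality from \cite[Remark 4.2]{Cao-Xie:23} really carries over to $\rho=0$, in particular that the $B$-terms drop out only by virtue of $B_1=B_2=B_3$. A secondary point is justifying that the scale-invariant quantity converges to its cylinder value uniformly at infinity, which is needed for the maximum to be attained. For that I would rely on the smooth convergence in Brendle's definition.
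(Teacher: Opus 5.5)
Your proposal is correct and follows essentially the same route as the paper. You get $B_1=B_2=B_3$ from the multiplicity-three Ricci eigenvalue and use the inequality for $\Delta_F \frac{C_3-C_1}{R}$ from the conical case. This scale-invariant quantity vanishes on the round cylinder, so Calabi's strong maximum principle gives $W^-\equiv 0$, and you conclude with Chen--Wang \cite[Theorem 1.1]{Chen-Wang:15}. The only cosmetic difference is that you exclude the Ricci-flat alternative using $R>0$, whereas the paper uses half-PIC (which fails when $W^-=0$ and $R=0$, since then $C=0$); both work.
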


\begin{proof} [Sketch  of Proof]
	Arguing as in the proof of Theorem \ref{thm:conical_half-PIC} and using the fact that the scaling invariant quantity $(C_3 - C_1)/R = 0$ on the round cylinder, we conclude that $(M^4,g,f)$ is half locally conformally flat. Therefore, by \cite[Theorem 1.1]{Chen-Wang:15}, $(M^4, g, f)$ is either Ricci-flat or isometric to the Bryant soliton up to scalings. The Ricci-flat case is excluded by the half-PIC condition, hence $(M^4, g, f)$ is isometric to the Bryant soliton up to scalings.
\end{proof}

\begin{rmk}
	As in Remark \ref{rmk:half-harmonic_ALCF},  the asymptotic assumptions in Theorems \ref{thm:conical_half-PIC} and \ref{thm:cylin_half-PIC} can be replaced by the condition of {\em asymptotically half locally conformally flat} given in Definition \ref{defn:ALCF}. Indeed, this condition implies that ${|W^{\pm}|}/{R} = 0$ at infinity, which in turn yields $$\frac{C_3-C_1}{R} = 0, \qquad {\rm or} \qquad  \frac{A_3-A_1}{R} = 0,$$ at infinity, since $A= W^+ + ({R}/{12})I$ and $C = W^- + ({R}/{12})I$. 
\end{rmk}

\medskip


\begin{thebibliography}{99} 

	\bibitem{Bernstein-Mettler:15}
	Bernstein, J.; Mettler, T., {\em Two-dimensional gradient Ricci solitons revisited}, Int. Math. Res. Not. IMRN 2015, no.~1, 78--98.
	
	\bibitem{Brendle:13}
	Brendle, S., {\em Rotational symmetry of self-similar solutions to the Ricci flow}, Invent. Math. {\bf 194} (2013), no.~3, 731--764.
	
	\bibitem{Brendle:14}
	Brendle, S., {\em Rotational symmetry of Ricci solitons in higher dimensions}, J. Differential Geom. {\bf 97} (2014), no.~2, 191--214.

	\bibitem{Brendle-S:08} 
	Brendle, S.; Schoen, R., {\em Classification of manifolds with weakly 1/4-pinched curvatures}, Acta Math. {\bf 200} (2008), no.~1, 1--13.

	\bibitem{Calabi:58}
	Calabi, E., {\em An extension of E. Hopf's maximum principle with an application to Riemannian geometry}, Duke Math. J. {\bf 25} (1958), 45--56.
	
	\bibitem{Cao:97}
	Cao, H.-D., {\em Limits of solutions to the K\"ahler-Ricci flow}, J. Differential Geom. {\bf 45} (1997), no.~2, 257--272.
	
	\bibitem{CaoCCMM:14}
	Cao, H.-D.; Catino, G.; Chen, Q.; Mantegazza, C.; Mazzieri, L., {\em Bach-flat gradient steady Ricci solitons}, Calc. Var. Partial Differential Equations {\bf 49} (2014), no.~1-2, 125--138.
	
	\bibitem{Cao-Chen:12}
	Cao, H.-D.; Chen, Q., {\em On locally conformally flat gradient steady solitons}, Trans. Amer. Math. Soc. {\bf 364} (2012), no.~5, 2377--2391.
	
	\bibitem{Cao-Xie:23}
	Cao, H.-D.; Xie, J., {\em Four-dimensional complete gradient shrinking Ricci solitons with half positive isotropic curvature}, Math. Z. {\bf 305} (2023), no.~2, Paper No.~25, 22 pp.
	
	\bibitem{Cao-Xie:25}
	Cao, H.-D.; Xie, J., {\em Four-dimensional gradient Ricci solitons with (half) nonnegative isotropic curvature}, J. Math. Pures Appl. (9) {\bf 197} (2025), Paper No.~103686, 21 pp.
	
	\bibitem{Cao-Xie:25b}
	Cao, H.-D.; Xie, J., {\em Curvature pinching of asymptotically conical gradient expanding Ricci solitons}, preprint (2025), arXiv:2510.05075.
	
	\bibitem{Cao-Yu:21}
	Cao, H.-D.; Yu, J., {\em On complete gradient steady Ricci solitons with vanishing $D$-tensor}, Proc. Amer. Math. Soc. {\bf 149} (2021), no.~4, 1733--1742.
	
	\bibitem{Catino-Mantegazza:11}
	Catino, G.; Mantegazza, C., {\em Evolution of the Weyl tensor under the Ricci flow}, Ann. Inst. Fourier (Grenoble) {\bf 61} (2011), no.~4, 1407--1435.

	\bibitem{Chan:23}
	Chan, P.-Y., {\em Curvature estimates and gap theorems for expanding Ricci solitons}, Int. Math. Res. Not. IMRN 2023, no.~1, 406--454.

	\bibitem{ChenBL:09}
	Chen, B.-L., {\em Strong uniqueness of the Ricci flow}, J. Differential Geom. {\bf 82} (2009), no.~2, 363--382.
	
	\bibitem{Chen-Zhu:00}
	Chen, B.-L.; Zhu, X.-P., {\em Complete Riemannian manifolds with pointwise pinched curvature}, Invent. Math. {\bf 140} (2000), no.~2, 423--452.

	\bibitem{Chen-Wang:15}
	Chen, X.; Wang, Y., {\em On four-dimensional anti-self-dual gradient Ricci solitons}, J. Geom. Anal. {\bf 25} (2015), no.~2, 1335--1343. 

	\bibitem{Cho-Li:23}
	Cho, J.~H.; Li, Y., {\em Ancient solutions to the Ricci flow with isotropic curvature conditions}, Math. Ann. {\bf 387} (2023), no.~1-2, 1009--1041.

	\bibitem{Deng-Zhu:20a}
	Deng, Y.; Zhu, X., {\em Higher dimensional steady Ricci solitons with linear curvature decay}, J. Eur. Math. Soc. (JEMS) {\bf 22} (2020), no.~12, 4097--4120.
	
	\bibitem{Deng-Zhu:20b}
	Deng, Y.; Zhu, X., {\em Classification of gradient steady Ricci solitons with linear curvature decay}, Sci. China Math. {\bf 63} (2020), no.~1, 135--154.
	
	\bibitem{Derdzinski:00} 
	Derdzinski, A., {\em Einstein metrics in dimension four}, Handbook of differential geometry, Vol. I, 419--707, North-Holland, Amsterdam, 2000. 
	
	\bibitem{Enders-Mueller-Topping:11}
	Enders, J.; M\"uller, R.; Topping, P., {\em On type-I singularities in Ricci flow}, Comm. Anal. Geom. {\bf 19} (2011), no.~5, 905--922.
	
	\bibitem{Ha:82}
	Hamilton, R., {\em Three-manifolds with positive Ricci curvature}, J. Differential Geom. {\bf 17} (1982), no.~2, 255--306.
	
	\bibitem{Ha:88}
	Hamilton, R., {\em The Ricci flow on surfaces}, Contemp. Math. {\bf 71} (1988), 237--261.
	
	\bibitem{Ha:95F}
	Hamilton, R., {\em The formation of singularities in the Ricci flow}, Surveys in Differential Geometry (Cambridge, MA, 1993), {\bf 2}, 7--136, Int. Press, Cambridge, MA, 1995.
	
	\bibitem{Ha:97}
	Hamilton, R., {\em Four-manifolds with positive isotropic curvature}, Comm. Anal. Geom. {\bf 5} (1997), no.~1, 1--92.
	
	\bibitem{Kim:17}
	Kim, J., {\em On a classification of 4-d gradient Ricci solitons with harmonic Weyl curvature}, J. Geom. Anal. {\bf 27} (2017), no.~2, 986--1012.
	
	\bibitem{Kim:25}
	Kim, J., {\em Classification of gradient Ricci solitons with harmonic Weyl curvature}, J. Geom. Anal. {\bf 35} (2025), no.~5, Paper No. 139, 33 pp.
	
	\bibitem{Lai:24}
	Lai, Y., {\em A family of 3D steady gradient solitons that are flying wings}, J. Differential Geom. {\bf 126} (2024), no.~1, 297--328.
	
	\bibitem{Lai:25}
	Lai, Y., {\em $O(2)$-symmetry of 3D steady gradient Ricci solitons}, Geom. Topol. {\bf 29} (2025), no.~2, 687--789.
	
	\bibitem{Lai:25b}
	Lai, Y., {\em 3D flying wings for any asymptotic cones}, J. Differential Geom. {\bf 130} (2025), no.~3, 677--695.
	
	\bibitem{LiFJ:25}
	Li, F., {\em Rigidity of complete gradient steady Ricci solitons with harmonic Weyl curvature}, Pacific J. Math. {\bf 335} (2025), no.~2, 323--353.

	\bibitem{Li-Ni-Wang:18} 
	Li, X.; Ni, L.; Wang, K., {\em Four-dimensional gradient shrinking solitons with positive isotropic curvature}, Int. Math. Res. Not. IMRN 2018, no. 3, 949--959.
	
	\bibitem{Micallef-Wang:93} 
	Micallef, M. J.; Wang, M. Y., {\em Metrics with nonnegative isotropic curvature}, Duke Math. J. {\bf 72} (1993), no.~3, 649--672.
	
	\bibitem{Naber:10}
	Naber, A., {\em Non-compact shrinking four solitons with nonnegative curvature}, J. Reine Angew. Math. {\bf 645} (2010), 125--153.

	\bibitem{Perelman:03} 
	Perelman, G., {\em Ricci flow with surgery on three-manifolds}, arXiv:math/0303109. 
	
	\bibitem{Petersen-W:09} 
	Petersen, P.; Wylie, W., {\em Rigidity of gradient Ricci solitons}, Pacific J. Math. {\bf 241} (2009), no.~2, 329--345.
	
	\bibitem{Ramos:18} 
	Ramos, D., {\em Ricci flow on cone surfaces}, Port. Math. {\bf 75} (2018), no.~1, 11--65.
	
	\bibitem{Richard-Seshadri:16} 
	Richard, T.; Seshadri, H., {\em Positive isotropic curvature and self-duality in dimension 4}, Manuscripta Math. {\bf 149} (2016), no.~3-4, 443--457.
	
	\bibitem{Wilking:13} 
	Wilking, B., {\em A Lie algebraic approach to Ricci flow invariant curvature conditions and Harnack inequalities}, J. Reine Angew. Math. {\bf 679} (2013), 223--247.
	
	\bibitem{WWW:18}
	Wu, J.-Y.; Wu, P.; Wylie, W., {\em Gradient shrinking Ricci solitons of half harmonic Weyl curvature}, Calc. Var. Partial Differential Equations {\bf 57} (2018), no.~5, Paper No.~141, 15~pp.
	
\end{thebibliography}
\end{document}